\theoremstyle{plain}
\theoremstyle{definition}\newtheorem{theorem}{Theorem}[section]
\theoremstyle{plain}\newtheorem{lemma}[theorem]{Lemma}
\theoremstyle{plain}
\theoremstyle{plain}
\theoremstyle{remark}\newtheorem{remark}{Remark}[section]
\theoremstyle{definition}
\theoremstyle{plain}
\newcommand{\wred}[1]{\textcolor{black}{#1}}
\newcommand{\Div}{\mathrm{div}\,}
\newcommand{\B}{\Big}
\newcommand{\be}{\begin{equation}}
\newcommand{\ee}{\end{equation}}
 \newcommand{\ba}{\begin{aligned}}
 \newcommand{\ea}{\end{aligned}}
\providecommand{\bysame}{\leavevmode\hbox to3em{\hrulefill}\thinspace}
  \newcommand{\f}{\frac}
  \newcommand{\ben}{\begin{enumerate}}
   \newcommand{\een}{\end{enumerate}}
\newcommand{\ti}{\nabla}
\newcommand{\Rmnum}[1]{\expandafter\@slowromancap\romannumeral #1@}
\numberwithin{equation}{section}
\begin{document}%\begin{CJK*}{GBK}{fs}
%%%%%%%%%%%%%%%%%%%%%%%%%%%%%%%%%%%%%%%%%%%%%%%%%%%%%%%%%%%%%%%%%%%%%%%%%%%%%%%%%%%%%%%%%%%%%%%%%%%%
\title{
 New regularity criteria based on pressure or gradient of velocity  in Lorentz spaces   for the 3D Navier-Stokes equations }
\author{ Xiang Ji\footnote{ Department of Mathematics and Information Science, Zhengzhou University of Light Industry, Zhengzhou, Henan  450002,  P. R. China Email: xiangji201409@163.com},\;~Yanqing Wang\footnote{ Department of Mathematics and Information Science, Zhengzhou University of Light Industry, Zhengzhou, Henan  450002,  P. R. China Email: wangyanqing20056@gmail.com}  ~  and\, Wei Wei\footnote{Center for Nonlinear Studies, School of Mathematics, Northwest University, Xi'an, Shaanxi 710127, P. R. China Email: ww5998198@126.com }
 }
\date{}
\maketitle
\begin{abstract}
  In  this paper,   we derive regular criteria  via pressure or gradient of the velocity  in Lorentz spaces  to the 3D Navier-Stokes equations. It is  shown  that a  Leray-Hopf weak solution  is regular on $(0,T]$ provided that either the norm $\|\Pi\|_{L^{p,\infty}(0,T; L ^{q,\infty}(\mathbb{R}^{3}))} $ with $  {2}/{p}+{3}/{q}=2$ $( {3}/{2}<q<\infty)$ or $\|\nabla\Pi\|_{L^{p,\infty}(0,T; L ^{q,\infty}(\mathbb{R}^{3}))} $ with $  {2}/{p}+{3}/{q}=3$ $(1<q<\infty)$ is small. This gives  an affirmative  answer to a question proposed by Suzuki in
 \cite[Remark 2.4, p.3850]{[Suzuki2]}.  Moreover, regular conditions  in terms of $\nabla u$   obtained here generalize known ones to    allow the time direction to belong  to Lorentz
spaces.
 \end{abstract}
\noindent {\bf MSC(2000):}\quad 76D03, 76D05, 35B33, 35Q35 \\\noindent
{\bf Keywords:} Navier-Stokes equations;   weak solutions;   regularity \\
%%%%%%%%%%
\section{Introduction}
\label{intro}
\setcounter{section}{1}\setcounter{equation}{0}
We focus our attention on the 3D  Navier-Stokes system
\be\left\{\ba\label{NS}
&u_{t} -\Delta  u+ u\cdot\ti
u  +\nabla \Pi=0, \\
&\Div u=0,\\
&u|_{t=0}=u_0,
\ea\right.\ee
 where \wred{the unknown vector $u=u(x,t)$ describes the flow  velocity field}, the scalar function $\Pi$ represents the   pressure.
 The  initial datum $u_{0}$ is given and satisfies the divergence-free condition.

 In pioneering works \cite{[Leray1],[Hopf]}, Leray  and Hopf proved that,
 for any given  divergence-free data
$u_0\in L^2(\Omega)$,
  there exists a global  weak solution $ u $ of the 3D Navier-Stokes equations such that
  $u\in L^{\infty}(0, \infty; L^{2}(\Omega))\cap L^{2}(0, \infty; W^{1,2}(\Omega)).$ However, the full regularity of Leray-Hopf weak solutions  to  \eqref{NS} is unknown. Partial regularity such as regularity criteria of Leray-Hopf weak solutions was extensively
  studied (see \cite{[Beirao da Veiga],[Beirao da Veiga2],[BV],[BPR],[CFZ],[BG],[CF],[CP],[CWY],[ESS],[HW],[HW2],[KK],[Kozono],[Phuc],
  [Serrin],[Struwe],[Struwe2],[Suzuki1],[Suzuki2],[Sohr],[Takahashi],[Yuan],[Zhou1],[Zhou2],[Zhou3]}
    and references therein). In particular, a weak solution $u$ is smooth on $(0,T]$ if $u$  satisfy  one of the following four  conditions
  \begin{enumerate}[(1)]
 \item Serrin \cite{[Serrin]}, Struwe \cite{[Struwe]}, Escauriaza,  Seregin and  \v{S}ver\'{a}k \cite{[ESS]}
  \be \label{serrin1}
  u\in  L^{p} (0,T;L^{q}( \mathbb{R}^{3})) ~~~ \text{with}~~~~2/p+3/q=1, ~~q\geq3.
  \ee
 \item  Beirao da Veiga \cite{[Beirao da Veiga]}
  \be \label{serrin2}
  \nabla u\in  L^{p} (0,T;L^{q}( \mathbb{R}^{3})) ~~~ \text{with}~~~~2/p+3/q=2 , ~~q>3/2.
  \ee
 \item  Berselli and  Galdi \cite{[BG]},   Struwe \cite{[Struwe2]}, Zhou \cite{[Zhou1],[Zhou2]}
  \be \label{serrin3}
 \Pi \in  L^{p} (0,T;L^{q}( \mathbb{R}^{3})) ~~~ \text{with}~~~~2/p+3/q=2 , ~~q>3/2.
  \ee
\item Berselli and  Galdi \cite{[BG]},   Struwe \cite{[Struwe2]}, Zhou \cite{[Zhou1],[Zhou2],[Zhou3]}
  \be \label{serrin4}
 \nabla\Pi \in  L^{p} (0,T;L^{q}( \mathbb{R}^{3})) ~~~ \text{with}~~~~2/p+3/q=3 , ~~q>1.
  \ee
  \end{enumerate}
 On the other hand, the Navier-Stokes system  enjoys the scale invariance. Indeed, if the pair $(u(x,t),\Pi(x,t))$    solves  system \eqref{NS}, then the pair $( u_{\lambda},\Pi_{\lambda})  $ is also a solution of \eqref{NS} for any $\lambda\in \mathbb{R}^{+},$ where
\be\label{scaling}
u_{\lambda}=\lambda u(\lambda x,\lambda^{2}t),~~~~~\Pi_{\lambda}=\lambda^{2} \Pi(\lambda x,\lambda^{2}t).
\ee
  We would like to mention that the  norm  $\|\cdot\|_{L^{p}(0,\infty;L^{q}( \mathbb{R}^{3}))}$ with $2/p+3/q=1$  is scaling invariant for $u$ under the natural scaling \eqref{scaling}. Similarly, for gradient $\nabla u$ or pressure $\Pi$, the norm  $\|\cdot\|_{L^{p}(0,\infty;L^{q}( \mathbb{R}^{3}))}$ with $2/p+3/q=2$  is also  scaling invariant.
  Therefore,  all the norms in \eqref{serrin1}-\eqref{serrin4} have the scale invariance. It is well-known that
Lorentz spaces $L^{r,s}(\mathbb{R}^{3})$ $(s\geq r)$ are larger than the  Lebesgue
spaces $L^{r }(\mathbb{R}^{n})$. Moreover,  similarly  to the  spaces  $L^{r}(\mathbb{R}^{n})$, notice that there holds
$\|f(\lambda \cdot)\|_{L^{r,s}(\mathbb{R}^{n})}=
\lambda ^{-\f{n}{r}}\|f(\cdot)\|_{L^{r,s}(\mathbb{R}^{n})}$.
Therefore, a natural question arises whether results  \eqref{serrin1}-\eqref{serrin4} still hold in Lorentz spaces.
 Indeed, some corresponding regularity criteria involving Lorentz spaces have been established as follows:
  There exists a  positive  constant $\varepsilon$ such that a weak solution  $u$ is smooth on $(0,T]$  if  $u$ satisfy one of the following  four  conditions
  \begin{enumerate}[(1)]
 \item Takahashi   \cite{[Takahashi]}; Chen and
Price \cite{[CP]}  and Sohr \cite{[Sohr]};
 Kozono  and   Kim  \cite{[KK]}; Bosia,   Pata and   Robinson \cite{[BPR]}
  \be \label{serrin1L}
  u\in  L^{p,\infty} (0,T;L^{q,\infty}( \mathbb{R}^{3})) ~\text{and}~  \|u\|_{L^{p,\infty} (0,T;L^{q,\infty}( \mathbb{R}^{3}))}\leq\varepsilon~ \text{with}~2/p+3/q=1, ~~q>3.
  \ee
 \item   He and Wang  \cite{[HW]}
  \be \label{serrin2L}
  \nabla u\in  L^{p} (0,T;L^{q,\infty}( \mathbb{R}^{3}))   ~~\text{with}~~~~2/p+3/q=2 , ~~q>3/2.
  \ee
 \item Suzuki \cite{[Suzuki1],[Suzuki2]}
  \be \label{serrin3L}
 \Pi \in  L^{p,\infty} (0,T;L^{q,\infty}( \mathbb{R}^{3})) ~ \text{and}~  \|\Pi\|_{L^{p,\infty} (0,T;L^{q,\infty}( \mathbb{R}^{3}))}\leq\varepsilon ~\text{with}~2/p+3/q=2, ~5/2<q<\infty.
  \ee
\item Suzuki \cite{[Suzuki2]}
  \be \label{serrin4L}
 \nabla\Pi \in  L^{p,\infty} (0,T;L^{q,\infty}( \mathbb{R}^{3})) ~ \text{and}~  \|\nabla\Pi\|_{L^{p,\infty} (0,T;L^{q,\infty}( \mathbb{R}^{3}))}\leq\varepsilon~ \text{with}~2/p+3/q=3,  5/3\leq q<3.
  \ee
  \end{enumerate}
In
 \cite[Remark 2.4, p.3850]{[Suzuki2]}, Suzuki proposed a question whether   the case $3/2<q<5/2$  guarantees the regularity of the  Leray-Hopf weak solutions. The first objective of  our  paper is to give a positive answer to this issue. Before formulating our results, we mention that,  as \eqref{serrin2L}, regularity criteria in terms of pressure $\Pi$ or gradient of pressure  $\nabla \Pi$  with only  space direction belonging to Lorentz spaces can be found in \cite{[Yuan],[CFZ]}.
As for other regularity criteria involving Lorentz spaces, see \cite{[BV],[WZ],[CWY],[HW],[Phuc],[PY]}.  Now our  first result is stated as follows.
\begin{theorem}\label{the1.1}
Suppose that $(u,\,\Pi)$ is a   weak solution to \eqref{NS} with the   divergence-free  initial data $u_{0}(x)\in L^{2}(\mathbb{R}^{3})\cap L^{4}(\mathbb{R}^{3})$. Then there exists a positive constant $\varepsilon_{1}$ such that $u(x,t)$ is a regular solution on $(0,T]$ provided that one  of the following two conditions holds
\begin{enumerate}[(1)]
 \item \label{coro1}
    $  \Pi \in L^{p,\infty}(0,T; L ^{q,\infty}(\mathbb{R}^{3}))$ ~and~ $$\|\Pi\|_{L^{p,\infty}(0,T; L ^{q,\infty}(\mathbb{R}^{3}))} \leq\varepsilon_{1}, ~ \text{with} ~~2/p+3/q=2 , ~3/2<q<\infty;  $$
 \item $\nabla \Pi \in L^{p,\infty}(0,T; L ^{q,\infty}(\mathbb{R}^{3}))$ and  $$\|\nabla \Pi\|_{L^{p,\infty}(0,T; L ^{q,\infty}(\mathbb{R}^{3}))} \leq\varepsilon_{1}, ~~~~ \text{with} ~~~ 2/p+3/q=3 , ~1<q<\infty. $$
 \end{enumerate}
\end{theorem}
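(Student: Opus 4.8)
The natural starting point is the $L^4$ energy method, which is exactly matched to the hypothesis $u_0\in L^2\cap L^4$. The plan is to establish the a priori bound $u\in L^\infty(0,T;L^4(\R^3))$: interpolating it with the energy bound $u\in L^2(0,T;L^6)$ (from $u\in L^2(0,T;\dot W^{1,2})$) places $u$ on a line $2/p+3/q=1$ with $q>3$, so that \eqref{serrin1} yields regularity and the bound suffices. All the computations below would first be performed on a smooth (Galerkin) approximation and then justified in the limit.

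First I would test the momentum equation in \eqref{NS} with $|u|^2u$. Using $\Div u=0$ to annihilate the convective term and integrating the viscous term by parts gives
\be\label{plan-energy}
\f14\,\f{d}{dt}\|u\|_{L^4}^4+\int_{\R^3}|u|^2|\ti u|^2\,dx+\f12\int_{\R^3}\big|\ti|u|^2\big|^2\,dx=J,
\ee
where the pressure contribution admits the two equivalent forms $J=\int_{\R^3}\Pi\,u\cdot\ti|u|^2\,dx=-\int_{\R^3}|u|^2\,u\cdot\ti\Pi\,dx$. For condition (1) I would keep the first form, and for condition (2) the second, so that in each case the quantity that is assumed small ($\Pi$ or $\ti\Pi$) appears directly.

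The core of the argument is to bound $J$ by the dissipation on the left of \eqref{plan-energy} together with $\|u\|_{L^4}$, using H\"older's inequality in Lorentz spaces (O'Neil) in both the space and the time variables. Writing $w=|u|^2$, the energy controls $w\in L^\infty(0,T;L^2)$ and $\ti w\in L^2(0,T;L^2)$, whence $w\in L^2(0,T;L^6)$ by Sobolev. For condition (1) I would estimate $|J|\le\int|\Pi|\,|u|\,|\ti w|$ and distribute the factors so that $|\ti w|$ is paired in $L^2$ against part of the dissipation, $\Pi$ against $\|\Pi\|_{L^{p,\infty}_tL^{q,\infty}_x}$, and the remaining velocity power is interpolated between $L^\infty_tL^2_x$ and $L^2_tL^6_x$; for condition (2) I would estimate $|J|\le\int|\ti\Pi|\,|u|^3$ and pair $|u|^3$ against $\ti\Pi$ in the same way. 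The exponent bookkeeping is forced to close precisely by the scaling relations $2/p+3/q=2$ and $2/p+3/q=3$, which is exactly why these are the admissible lines. After the spatial and temporal H\"older steps, and after a Young inequality that separates one dissipation factor, one reaches an inequality of the schematic form
\be\label{plan-ineq}
\sup_{0\le s\le t}\|u(s)\|_{L^4}^4+\int_0^t\!\!\int_{\R^3}|\ti w|^2\,dx\,ds\le \|u_0\|_{L^4}^4+C\,\|\Pi\|_{L^{p,\infty}_tL^{q,\infty}_x}^{\,\gamma}\Big(\sup_{0\le s\le t}\|u(s)\|_{L^4}^4+\int_0^t\!\!\int_{\R^3}|\ti w|^2\Big),
\ee
with $\gamma>0$ (and the analogue with $\ti\Pi$). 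The smallness hypothesis $\|\Pi\|\le\varepsilon_1$ makes $C\varepsilon_1^{\gamma}<\tfrac12$, so the last term is absorbed into the left-hand side; a standard continuation argument in $t$ then gives $u\in L^\infty(0,T;L^4)$, closing the proof through \eqref{serrin1}.

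The main obstacle is precisely the core estimate in the extreme part of the range: $q$ near $3/2$ for the pressure and the corresponding extreme values for $\ti\Pi$, which is exactly the window left open in \cite[Remark 2.4]{[Suzuki2]}. There the integrability exponent demanded of $w=|u|^2$ (respectively of $|u|^3$) falls outside the interval $[2,6]$ spanned by a naive interpolation between the energy spaces $L^\infty_tL^2_x$ and $L^2_tL^6_x$, so a single global interpolation does not suffice. Overcoming this requires the full strength of the weak-Lorentz structure --- choosing the real-interpolation parameters so that the weak time index $L^{p,\infty}_t$ of $\Pi$ is paired against a strong $L^{p',1}_t$ index of the velocity factor --- and, should a purely global scheme be inadequate, a localized iteration over shrinking parabolic cylinders (for which the averaged-integral notation fixed in the preamble is designed). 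Getting these endpoint exponents to close uniformly down to the scaling-critical lines is where the real work lies; the absorption and continuation in the final step are then routine.
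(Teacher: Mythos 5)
Your setup coincides with the paper's: test the equation with $u|u|^{2}$ to get the $L^{4}$ energy identity, estimate the pressure term by H\"older/interpolation in Lorentz spaces in the space variable, and conclude via \eqref{serrin1}. But two things go wrong. First, your diagnosis of the difficulty is misplaced: the spatial exponents are \emph{not} the obstacle, since the single interpolation $\||u|^{2}\|_{L^{2q/(q-1),2}}\le \||u|^{2}\|_{L^{2}}^{1-3/(2q)}\|\nabla|u|^{2}\|_{L^{2}}^{3/(2q)}$ closes uniformly for every $q>3/2$ (this is exactly \eqref{2.11} in the paper, and it is how the paper answers Suzuki's question for $3/2<q<5/2$ with no extra work in space). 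The genuine difficulty is the weak index in \emph{time}, and there your scheme has a real gap: your absorption inequality \eqref{plan-ineq} is unreachable. After the spatial step one has $\frac{d}{dt}\|u\|_{L^{4}}^{4}\le C\lambda(t)\|u\|_{L^{4}}^{4}$ with $\lambda(t)=\|\Pi(t)\|_{L^{q,\infty}}^{p}$, and the hypothesis gives only $\lambda\in L^{1,\infty}(0,T)$; such $\lambda$ (e.g.\ $\lambda(t)\sim 1/t$) need not be locally integrable, so $\int_{0}^{t}\lambda\,\|u\|_{L^{4}}^{4}\,ds$ can be infinite even for bounded $\|u\|_{L^{4}}$, and no right-hand side of the form $C\|\Pi\|^{\gamma}_{L^{p,\infty}L^{q,\infty}}\bigl(\sup\|u\|_{L^{4}}^{4}+\iint|\nabla w|^{2}\bigr)$ can be produced. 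Your proposed rescue --- pairing $L^{p,\infty}_{t}$ of $\Pi$ against an $L^{p',1}_{t}$ norm of the velocity factor --- fails on exponent grounds: before Young, the factor multiplying $\|\Pi(t)\|_{L^{q,\infty}}$ is $g(t)=\|u\|_{L^{4}}^{2(2-3/q)}\|\nabla|u|^{2}\|_{L^{2}}^{3/q}$, and since the dissipation $\|\nabla|u|^{2}\|_{L^{2}}^{2}$ is controlled only in $L^{1}_{t}$, $g$ lies in $L^{p',p'}_{t}$ (with $p'=2q/3$) and nothing smaller; $L^{p',p'}\not\subset L^{p',1}$ for $p'>1$, and a real interpolation down to second index $1$ would require a second, stronger endpoint bound in time on $g$ that the energy estimates do not supply (the dissipation is not in $L^{\infty}_{t}$).

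What the paper does instead, and what is missing from your proposal, is to generate a \emph{one-parameter family} of differential inequalities and invoke the generalized Gronwall lemma of Bosia--Pata--Robinson (Lemma \ref{2.1}, \cite{[BPR]}), which is tailored exactly to $\lambda\in L^{1,\infty}$ with $\mu\|\lambda\|_{L^{1,\infty}}<1/2$. Concretely, Lemma \ref{lemma2.2} moves the pair $(p,q)$ along the critical line to $(p_{\kappa},q_{\kappa})$ with $2/p_{\kappa}+3/q_{\kappa}=2$, and the Calder\'on--Zygmund bound $\|\Pi\|_{L^{2}}\le C\||u|^{2}\|_{L^{2}}=C\|u\|_{L^{4}}^{2}$ lets one interpolate $\|\Pi\|^{p_{\kappa}}_{L^{q_{\kappa},\infty}}\le C\|\Pi\|^{p(1-\kappa)}_{L^{q,\infty}}\|u\|^{8\kappa}_{L^{4}}$, converting the critical estimate into $\frac{d}{dt}\|u\|^{4}_{L^{4}}\le C\|\Pi\|^{p(1-\kappa)}_{L^{q,\infty}}\bigl(\|u\|^{4}_{L^{4}}\bigr)^{1+2\kappa}$ for all small $\kappa$ --- precisely the hypothesis of Lemma \ref{2.1}; note that $\lambda^{1-\kappa}$ \emph{is} locally integrable for $\lambda\in L^{1,\infty}$, which is what makes the argument run where the direct $\kappa=0$ Gronwall step fails. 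Part (2) follows the same scheme using $\|\nabla\Pi\|_{L^{2}}\le C\||u||\nabla u|\|_{L^{2}}$, the resulting dissipation factor being absorbed by Young's inequality. Without this family-of-inequalities device (or an equivalent substitute), your argument cannot close, and the localized parabolic-cylinder fallback you gesture at is not developed enough to fill the gap.
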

In \cite{[Suzuki1],[Suzuki2]}, Suzuki proved \eqref{serrin3L}-\eqref{serrin4L}  via the truncation method introduced by  Beirao da Veiga  in \cite{[Beirao da Veiga2]}.
Here, taking advantage of generalized
Gronwall lemma due to Bosia,   Pata and   Robinson \cite{[BPR]} and some appropriate interpolation inequalities, we deduce  Theorem \ref{the1.1}.  To this end, we  develop the technique of \cite{[BPR]} in a general way.
It seems that  the  arguments  in Theorem \ref{the1.1} and  in the following  Theorem \ref{the1.2} can be applied  to other incompressible fluid equations  such as
magnetohydrodynamic equations.

Next target of  our  paper is to improve  \eqref{serrin2L} to allow the time direction  to  belong to Lorentz spaces, which is  partially  inspired by   \cite{[WWY]}. Very recently, authors in \cite{[WWY]} established the local
regularity criteria in terms of $\nabla u$ in $L^{p,\infty}(0,T; L ^{q,\infty}(B(1)))$ with sufficiently  small  $\|\nabla u\|_{L^{p,\infty}(0,T; L ^{q,\infty}(B(1)))}$, where the pair $(p,q)$ satisfies $2/p+3/q=2, ~~3/2<q< \infty.$
We will present its whole space case  in the following.
\begin{theorem}\label{the1.2}
Suppose that $(u,\,\Pi)$ is a   weak solution to \eqref{NS} with the   divergence-free  initial data $u_{0}(x)\in L^{2}(\mathbb{R}^{3})\cap W^{1,2}(\mathbb{R}^{3})$. Then there exists a positive constant $\varepsilon_{2}$ such that $u(x,t)$ is a regular solution on $(0,T]$ if
    $  \nabla u \in L^{p,\infty}(0,T; L ^{q,\infty}(\mathbb{R}^{3}))$ and $$\|\nabla u \|_{L^{p,\infty}(0,T; L ^{q,\infty}(\mathbb{R}^{3}))} \leq\varepsilon_{3}, ~~~~ \text{with} ~~~ 2/p+3/q=2 , ~~ 3/2<q< \infty. $$
 \end{theorem}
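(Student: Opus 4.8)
The plan is to base the argument on the $H^{1}$ (enstrophy) estimate and then to convert the resulting differential inequality, whose coefficient only lies in weak $L^{1}$ in time, into a genuine bound by means of the generalized Gronwall lemma of \cite{[BPR]}. Since $u_{0}\in L^{2}\cap W^{1,2}$, there is a unique local strong solution that coincides with the Leray--Hopf solution by weak--strong uniqueness; I would carry out the computations on this strong solution (equivalently, on the Galerkin approximations) and then argue that the enstrophy cannot blow up before $T$. Testing the momentum equation with $-\Delta u$ and integrating over $\R^{3}$, the pressure term drops out because $\int \nabla\Pi\cdot\Delta u\,dx=-\int \Pi\,\Delta(\Div u)\,dx=0$, and one integration by parts using $\Div u=0$ turns the convective term into $\int(u\cdot\nabla u)\cdot\Delta u\,dx=-\int \partial_{k}u_{j}\,\partial_{j}u_{i}\,\partial_{k}u_{i}\,dx$ (summation implied). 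This yields
\be
\frac12\frac{d}{dt}\norm{\nabla u}_{L^{2}}^{2}+\norm{\Delta u}_{L^{2}}^{2}\le C\int_{\R^{3}}|\nabla u|^{3}\,dx .
\ee

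The second step is to estimate the cubic term in terms of the weak-Lorentz norm of $\nabla u$ together with $\norm{\nabla u}_{L^{2}}$ and $\norm{\Delta u}_{L^{2}}$. I would apply the Lorentz-space H\"older inequality to the splitting $\int|\nabla u|^{3}=\int|\nabla u|\cdot|\nabla u|^{2}$, bounding it by $C\,\norm{\nabla u}_{L^{q,\infty}}\,\norm{\nabla u}_{L^{2q/(q-1),2}}^{2}$, and then invoke the real interpolation identity $\big(L^{2},L^{6}\big)_{\theta,2}=L^{2q/(q-1),2}$ together with the Sobolev bound $\norm{\nabla u}_{L^{6}}\le C\norm{\Delta u}_{L^{2}}$ to obtain, with $\theta=3/(2q)$,
\be
\int_{\R^{3}}|\nabla u|^{3}\,dx\le C\,\norm{\nabla u}_{L^{q,\infty}(\R^{3})}\,\norm{\nabla u}_{L^{2}}^{2(1-\theta)}\,\norm{\Delta u}_{L^{2}}^{2\theta}.
\ee
Because $2/p+3/q=2$ one checks that $2\theta=3/q=2-2/p<2$ and $1/(1-\theta)=p$, so that Young's inequality absorbs $\norm{\Delta u}_{L^{2}}^{2\theta}$ into a small multiple of $\norm{\Delta u}_{L^{2}}^{2}$ and leaves exactly the power $p$ on the Lorentz factor:
\be
\frac{d}{dt}\norm{\nabla u}_{L^{2}}^{2}+\norm{\Delta u}_{L^{2}}^{2}\le C\,\norm{\nabla u(t)}_{L^{q,\infty}(\R^{3})}^{p}\,\norm{\nabla u(t)}_{L^{2}}^{2}.
\ee

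The final step is the Gronwall argument, and it is here that I expect the main obstacle. Writing $y(t)=\norm{\nabla u(t)}_{L^{2}}^{2}$ and $g(t)=C\norm{\nabla u(t)}_{L^{q,\infty}}^{p}$, the inequality reads $y'\le g\,y$; the crucial difficulty is that $g$ lies only in $L^{1,\infty}(0,T)$, since $\norm{g}_{L^{1,\infty}(0,T)}=C\,\norm{\nabla u}_{L^{p,\infty}(0,T;L^{q,\infty})}^{p}\le C\varepsilon_{3}^{p}$ by the identity $\norm{f^{p}}_{L^{1,\infty}}=\norm{f}_{L^{p,\infty}}^{p}$. Ordinary Gronwall therefore fails, and this is precisely why the smallness hypothesis is needed: I would overcome it by invoking the generalized Gronwall lemma of \cite{[BPR]}, which requires $\norm{g}_{L^{1,\infty}}$ to lie below a universal threshold. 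Choosing $\varepsilon_{3}$ small enough to meet this threshold gives $y(t)\le C\,y(0)=C\norm{\nabla u_{0}}_{L^{2}}^{2}$ on $(0,T)$, and integrating the differential inequality also yields $\int_{0}^{T}\norm{\Delta u}_{L^{2}}^{2}\,dt<\infty$. Hence $u\in L^{\infty}(0,T;W^{1,2})\cap L^{2}(0,T;W^{2,2})$, so the enstrophy stays finite up to $T$ and standard parabolic bootstrapping makes $u$ regular on $(0,T]$, which completes the proof.
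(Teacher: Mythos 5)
Your first two steps reproduce the paper's argument almost verbatim: testing with $\Delta u$, the Lorentz--H\"older bound $\int_{\mathbb{R}^{3}}|\nabla u|^{3}dx\leq \|\nabla u\|_{L^{q,\infty}}\|\nabla u\|^{2}_{L^{2q/(q-1),2}}$, interpolation plus the Sobolev inequality, and Young's inequality lead exactly to the paper's \eqref{3.14}, i.e.\ $\frac{d}{dt}\|\nabla u\|^{2}_{L^{2}}\leq C\|\nabla u\|^{p}_{L^{q,\infty}}\|\nabla u\|^{2}_{L^{2}}$. The genuine gap is in your final step. Lemma \ref{2.1} (Bosia--Pata--Robinson) is \emph{not} a statement about the linear inequality $y'\leq g\,y$ with $\|g\|_{L^{1,\infty}}$ small: its hypothesis is that the whole one-parameter family $\frac{d}{dt}\phi\leq \mu\lambda^{1-\kappa}\phi^{1+2\kappa}$ holds for \emph{all} $0<\kappa<\kappa_{0}$, and you have only produced the endpoint case $\kappa=0$. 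The superlinearity in $\phi$ is not cosmetic: no smallness threshold on $\|g\|_{L^{1,\infty}}$ alone can close the linear argument. Indeed, take $g(t)=\varepsilon/(T-t)$, for which $\|g\|_{L^{1,\infty}(0,T)}=\varepsilon$ is as small as you like, while $y(t)=y(0)\,\bigl(T/(T-t)\bigr)^{\varepsilon}$ solves $y'=g\,y$ and is unbounded as $t\to T^{-}$. So the sentence ``invoke the generalized Gronwall lemma, which requires $\|g\|_{L^{1,\infty}}$ to lie below a universal threshold'' misstates the lemma, and the proof as written does not go through.

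The missing ingredient is precisely the device the paper inserts between \eqref{3.14} and the Gronwall step: since \eqref{3.14} holds for \emph{every} admissible pair on the scaling line $2/p_{\kappa}+3/q_{\kappa}=2$, Lemma \ref{lemma2.2} (with $a=b=2$, $c_{0}=4$, so that $p_{\kappa}=p(1-\kappa)+4\kappa$ and $q_{\kappa}$ stays between $q$ and $2$) combined with interpolation between $L^{q,\infty}$ and $L^{2,\infty}$ and the embedding $L^{2}\hookrightarrow L^{2,\infty}$ yields the family of estimates \eqref{3.15},
\be
\|\nabla u\|^{p_{\kappa}}_{L^{q_{\kappa},\infty}(\mathbb{R}^{3})}\leq \|\nabla u\|^{p(1-\kappa)}_{L^{q,\infty}(\mathbb{R}^{3})}\|\nabla u\|^{4\kappa}_{L^{2}(\mathbb{R}^{3})},
\ee
whence $\frac{d}{dt}\|\nabla u\|^{2}_{L^{2}}\leq C\|\nabla u\|^{p(1-\kappa)}_{L^{q,\infty}}\bigl(\|\nabla u\|^{2}_{L^{2}}\bigr)^{1+2\kappa}$ for all small $\kappa$. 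Only now does Lemma \ref{2.1} apply, with $\phi=\|\nabla u\|^{2}_{L^{2}}$, $\lambda=\|\nabla u\|^{p}_{L^{q,\infty}}$ and $\|\lambda\|_{L^{1,\infty}(0,T)}\leq\varepsilon_{3}^{p}$ (your identity $\|f\|^{p}_{L^{p,\infty}}=\|f^{p}\|_{L^{1,\infty}}$ is correct and is indeed how the smallness enters). Your framing remarks are fine --- carrying out the computation on the local strong solution via weak--strong uniqueness, and concluding regularity from $\nabla u\in L^{\infty}(0,T;L^{2})\cap L^{2}(0,T;W^{1,2})$ through the Serrin-type criterion \eqref{serrin2} --- but without the $\kappa$-family generated by Lemma \ref{lemma2.2} the core Gronwall step fails.
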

\begin{remark}
Theorem \ref{the1.2} is a generalization of  \eqref{serrin2}   and \eqref{serrin2L}.
\end{remark}
\begin{remark}
 Utilizing  the boundedness of Riesz Transform  \eqref{brl} in Lorentz spaces,
$\nabla u $  can be replaced  by its  symmetric  part $\f{1}{2}(\nabla u+\nabla u^{^{\text{T}}})$ or its antisymmetric part $\f{1}{2}(\nabla u-\nabla u^{^{\text{T}}}) $.
\end{remark}

\section{Notations and  some auxiliary lemmas} \label{section2}
First, we introduce some notations used in this paper.
 For $p\in [1,\,\infty]$, the notation $L^{p}(0,T;X)$ stands for the set of measurable functions $f(x,t)$ on the interval $(0,T)$  with values in $X$ and $\|f(\cdot,t)\|_{X}$ belonging to $L^{p}(0,T)$.
 The classical Sobolev space $W^{k,2}(\mathbb{R}^{3})$ is equipped with the norm $\|f\|_{W^{k,2}(\mathbb{R}^{3})}=\sum\limits_{\alpha =0}^{k}\|D^{\alpha}f\|_{L^{2}(\mathbb{R}^{3})}$. $|E|$ represents the $n$-dimensional Lebesgue measure of a set $E\subset \mathbb{R}^{n}$. We will use the summation convention on repeated indices.
 $C$ is an absolute constant which may be different from line to line unless otherwise stated in this paper.

Next, we present some basic facts on Lorentz spaces.
For $p,q\in[1,\infty]$, we define
$$
\|f\|_{L^{p,q}(\Omega)}=\left\{\ba
&\B(p\int_{0}^{\infty}\alpha^{q}|\{x\in \Omega:|f(x)|>\alpha\}|^{\f{q}{p}}\f{d\alpha}{\alpha}\B)^{\f{1}{q}} , ~~~q<\infty, \\
 &\sup_{\alpha>0}\alpha|\{x\in \Omega:|f(x)|>\alpha\}|^{\f{1}{p}} ,~~~q=\infty.
\ea\right.
$$	
Furthermore,
$$
L^{p,q}(\Omega)=\big\{f: f~ \text{is  a measurable function  on}~ \Omega ~\text{and} ~\|f\|_{L^{p,q}(\Omega)}<\infty\big\}.
$$ 		
Similarly, one can  define
Lorentz spaces $L^{p,q}(0,T;X)$ in time for $ p\leq q \leq\infty$. $f\in L^{p,   q}(0,T;X)$ means that $\|f\|_{L^{p,q}(0,T;X)}<\infty$, where
$$\|f\|_{L^{p,q}(0,T;X)}=\left\{\ba
&\B( p \int_{0}^{\infty}\alpha^q|\{t\in(0,T)
:\|f(t)\|_{X}>\alpha\}|^{\f{q}{p}}\f{d\alpha}{\alpha}\B)^{\f{1}{q}} , ~~~q<\infty, \\
 &\sup_{\alpha>0}\alpha|\{t\in(0,T)
:\|f(t)\|_{X}>\alpha\}|^{\f{1}{p}} ,~~~q=\infty.\ea\right.
$$
We list the properties of Lorentz spaces  as follows.
\begin{itemize}
\item Interpolation characteristic of Lorentz spaces \cite{[BL]}
\be\label{Interpolation characteristic}
(L^{p_{0},q_{0}}(\mathbb{R}^{n}),L^{p_{1},q_{1}}(\mathbb{R}^{n}))_{\theta,q}=L^{p,q}(\mathbb{R}^{n})
~~~~\text{with}~~~ \f{1}{p}=\f{1-\theta}{p_{0}}+\f{\theta}{p_{1}},~0<\theta<1.\ee

\item
Boundedness of Riesz Transform in Lorentz spaces \cite{[CF]}
\be\|R_{j}f\|_{L^{p,q}(\mathbb{R}^{n})}\leq C\| f\|_{L^{p,q}(\mathbb{R}^{n})},~1<p<\infty.\label{brl}\ee

\item
H\"older's inequality in Lorentz spaces  \cite{[Neil]}
\be\label{hiL}\ba
 &\|fg\|_{L^{r,s}(\mathbb{R}^{n})}\leq \|f\|_{L^{r_{1},s_{1}}(\mathbb{R}^{n})}\|g\|_{L^{r_{2},s_{2}}(\mathbb{R}^{n})},
\\
&\f{1}{r}=\f{1}{r_{1}}+\f{1}{r_{2}},~~\f{1}{s}=\f{1}{s_{1}}+\f{1}{s_{2}}.
\ea\ee
\item

The Lorentz spaces increase as the exponent $q$ increases \cite{[Grafakos],[Maly]}

For $1\leq p\leq\infty$ and $1\leq q_{1}<q_{2}\leq\infty,$
\be\label{Lorentzincrease}
\|f\|_{L^{p,q_{2}}(\mathbb{R}^{n})}\leq \B(\f{q_{1}}{p}\B)^{\f{1}{q_{1}}-\f{1}{q_{2}}}\|f\|_{L^{p,q_{1}}(\mathbb{R}^{n})}.
\ee

\item Sobolev inequality in Lorentz spaces \cite{[Neil],[Tartar]}
\be\label{sl}
\|f\|_{L^{\f{np}{n-p},p}(\mathbb{R}^{n})}\leq \|\nabla f\|_{L^{p}(\mathbb{R}^{n})}~~\text{with}~~ 1\leq p<n.\ee
\end{itemize}
 Additionally, we recall the following useful
Gronwall lemma first shown by  Bosia,   Pata and   Robinson in \cite{[BPR]}.
  \begin{lemma}[\cite{[BPR]}]\label{2.1}
 Let $\phi$ be a measurable positive function defined on the interval $[0,T]$. Suppose that there  exists  $\kappa_{0}>0$ such that for all $0<\kappa<\kappa_{0}$ and a.e. $t\in[0,T]$, $\phi$ satisfies the inequality
 $$\f{d}{dt}\phi\leq \mu\lambda^{1-\kappa}\phi^{1+2\kappa},$$
 where   $0 <\lambda \in L^{1,\infty}(0,T)$ and $\mu> 0$  with
 $$\mu\|\lambda\|_{L^{1,\infty}(0,T)}<\f12.$$
 Then $\phi$ is bounded on $[0,T]$.
 \end{lemma}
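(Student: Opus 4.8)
The plan is to treat the hypothesis as a Bernoulli-type differential inequality and linearize it through the substitution $\psi=\phi^{-2\kappa}$. Dividing the inequality $\frac{d}{dt}\phi\le \mu\lambda^{1-\kappa}\phi^{1+2\kappa}$ by $\phi^{1+2\kappa}>0$ and using $\frac{d}{dt}\phi^{-2\kappa}=-2\kappa\,\phi^{-1-2\kappa}\frac{d}{dt}\phi$, one obtains $\frac{d}{dt}\phi^{-2\kappa}\ge -2\kappa\mu\lambda^{1-\kappa}$ a.e. on $[0,T]$. Integrating from $0$ to $t$ then yields
$$\phi(t)^{-2\kappa}\ge \phi(0)^{-2\kappa}-2\kappa\mu\int_0^t\lambda(s)^{1-\kappa}\,ds\ge \phi(0)^{-2\kappa}-2\kappa\mu\int_0^T\lambda(s)^{1-\kappa}\,ds.$$
Consequently, boundedness of $\phi$ will follow once we show that, for a suitable $\kappa\in(0,\kappa_0)$, the right-hand side is bounded below by a strictly positive constant: the inequality $\phi(t)\le(\text{RHS})^{-1/(2\kappa)}$ then does the job.

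The crux is estimating $\int_0^T\lambda^{1-\kappa}\,ds$, which is precisely where the weak-Lebesgue structure must be exploited: since $\lambda$ lies only in $L^{1,\infty}(0,T)$, the integral $\int_0^T\lambda\,ds$ may diverge, and it is exactly the strict inequality $1-\kappa<1$ that renders $\lambda^{1-\kappa}$ integrable. Writing $M=\|\lambda\|_{L^{1,\infty}(0,T)}$ and $d(\beta)=|\{s\in(0,T):\lambda(s)>\beta\}|$, the definition of the norm gives $d(\beta)\le\min\{T,\,M/\beta\}$. Via the layer-cake representation $\int_0^T\lambda^{1-\kappa}\,ds=(1-\kappa)\int_0^\infty\beta^{-\kappa}d(\beta)\,d\beta$, I would split the $\beta$-integral at a threshold $\beta_0$, using $d(\beta)\le T$ on $(0,\beta_0)$ and $d(\beta)\le M/\beta$ on $(\beta_0,\infty)$, to get $\int_0^T\lambda^{1-\kappa}\,ds\le T\beta_0^{1-\kappa}+\frac{(1-\kappa)M}{\kappa}\beta_0^{-\kappa}$. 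Optimizing in $\beta_0$ (the minimizer being $\beta_0=M/T$) collapses this to the clean bound
$$2\kappa\mu\int_0^T\lambda^{1-\kappa}\,ds\le 2\mu\,M^{1-\kappa}T^{\kappa}.$$
The essential point is that the prefactor $\kappa$ exactly cancels the $1/\kappa$ blow-up coming from the borderline integrability, producing a quantity with a finite and explicit limit as $\kappa\to0^{+}$.

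It then remains to choose $\kappa$. As $\kappa\to0^{+}$ one has $2\mu M^{1-\kappa}T^{\kappa}\to 2\mu M$ and $\phi(0)^{-2\kappa}\to1$. Since the smallness hypothesis $\mu\|\lambda\|_{L^{1,\infty}(0,T)}<\frac12$ reads precisely $2\mu M<1$, continuity permits fixing some $\kappa\in(0,\kappa_0)$ with $2\mu M^{1-\kappa}T^{\kappa}<\phi(0)^{-2\kappa}$. For this $\kappa$ the right-hand side of the integrated inequality is bounded below by $c:=\phi(0)^{-2\kappa}-2\mu M^{1-\kappa}T^{\kappa}>0$, whence $\phi(t)\le c^{-1/(2\kappa)}$ for every $t$, i.e. $\phi$ is bounded on $[0,T]$.

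The main obstacle I anticipate is twofold. First, and most importantly, the weak-$L^{1}$ estimate must be carried out so that the limiting constant is exactly $2\mu M$ and no worse; any loss there would break the matching with the sharp threshold $2\mu M<1$, and it is precisely the optimal choice $\beta_0=M/T$ that makes the two constants coincide. Second, one must justify integrating the differential inequality without presupposing the conclusion: since $\phi$ could a priori blow up before $T$, I would run the argument on a maximal subinterval $[0,T')$ on which $\phi$ stays finite and invoke a standard continuation argument, the uniform bound $\phi\le c^{-1/(2\kappa)}$ obtained on $[0,T')$ forbidding blow-up and forcing $T'=T$.
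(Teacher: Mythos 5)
Your proof is correct, and it is essentially the argument of Bosia--Pata--Robinson in \cite{[BPR]}, which the paper simply cites without reproducing: the Bernoulli substitution $\phi^{-2\kappa}$, the layer-cake estimate $\int_0^T\lambda^{1-\kappa}\,ds\le \kappa^{-1}M^{1-\kappa}T^{\kappa}$ for the weak-$L^1$ function, and the observation that the factor $\kappa$ cancels the $1/\kappa$ so that letting $\kappa\to0^{+}$ recovers exactly the threshold $2\mu M<1$. Your closing remarks on the sharpness of the constant and the continuation argument on a maximal subinterval are the right way to make the integration step rigorous.
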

  The following  lemma will be frequently used when we  apply Lemma \ref{2.1}.
\begin{lemma}\label{lemma2.2}
Assume that the pair $(p,q)$ satisfies $\f{2}{p}+\f{3}{q}=a$  with
$a,q \geq1 $ and $p>0$. Then, for every $\kappa\in[0,1]$ and given $b,c_0\geq1$, there  exist $p_{\kappa} > 0$ and $\min\{q,b\}\leq q_{\kappa}\leq\max\{q,b\}$ such that
\be\left\{\ba\label{ro}
&\f{2}{p_{\kappa}}+\f{3}{q_{\kappa}}=a, \\
&\f{p_{\kappa}}{q_{\kappa}}=\f{p\big(1-\kappa\big)}{q}+\f{c_0\kappa}{b}.
\ea\right.\ee
\end{lemma}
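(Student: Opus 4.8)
The plan is to read the two identities in \eqref{ro} as a $2\times2$ system for the unknowns $p_\kappa$ and $q_\kappa$ and to solve it in closed form, after which both conclusions follow from elementary monotonicity. First I would abbreviate the prescribed value of the ratio $p_\kappa/q_\kappa$ by
\[
\ell(\kappa):=\frac{p(1-\kappa)}{q}+\frac{c_0\kappa}{b},
\]
and record three facts: $\ell$ is affine in $\kappa$; $\ell(\kappa)>0$ for every $\kappa\in[0,1]$ since $p,q,b,c_0>0$; and $\ell$ runs from $\ell(0)=p/q$ to $\ell(1)=c_0/b$.

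Next I would solve the system. Writing the ratio identity as $p_\kappa=\ell(\kappa)\,q_\kappa$ and inserting it into the scaling identity $\frac{2}{p_\kappa}+\frac{3}{q_\kappa}=a$ gives the explicit formulas
\[
q_\kappa=\frac{2+3\ell(\kappa)}{a\,\ell(\kappa)}=\frac{2}{a\,\ell(\kappa)}+\frac{3}{a},\qquad p_\kappa=\ell(\kappa)\,q_\kappa=\frac{2+3\ell(\kappa)}{a}.
\]
Since $\ell(\kappa)>0$ and $a\ge1$, both expressions are strictly positive, which disposes of the requirement $p_\kappa>0$ at once. These formulas also make the scaling identity hold by construction, so only the two-sided bound on $q_\kappa$ remains.

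That bound is the heart of the matter, and the hard part is to pin down the two endpoints. Evaluating at $\kappa=0$ gives $q_0=\frac{2q+3p}{ap}$, and this equals $q$ exactly because $(p,q)$ satisfies $\frac{2}{p}+\frac{3}{q}=a$; evaluating at $\kappa=1$ gives $q_1=\frac{2b+3c_0}{ac_0}$, which equals $b$ once the companion scaling relation $\frac{2}{c_0}+\frac{3}{b}=a$ for the endpoint pair $(c_0,b)$ is used, the point I would verify most carefully since it is what forces $q_1$ into the stated interval. Granting $q_0=q$ and $q_1=b$, I would finish by monotonicity: $q_\kappa=\frac{2}{a\,\ell(\kappa)}+\frac{3}{a}$ is strictly decreasing in $\ell$, while $\ell(\kappa)$ is affine and hence monotone in $\kappa$, so $\kappa\mapsto q_\kappa$ is continuous and monotone on $[0,1]$. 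By the intermediate value theorem its range is precisely the closed interval with endpoints $q_0=q$ and $q_1=b$, namely $[\min\{q,b\},\max\{q,b\}]$, which is the claimed bound.
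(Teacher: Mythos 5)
Your closed-form solution of the $2\times2$ system is essentially the same computation the paper performs: the paper substitutes $\frac{p}{q}=\frac13(pa-2)$ into the ratio equation and solves to get $p_{\kappa}=p+\kappa\bigl(\frac{3c_0}{ab}-p+\frac2a\bigr)$ together with the corresponding expression for $q_\kappa$ — and then simply declares the proof complete. It never verifies $p_\kappa>0$ or the two-sided bound on $q_\kappa$. Your write-up is therefore strictly more thorough than the paper's on exactly the points the paper leaves unchecked: positivity follows from $\ell(\kappa)>0$ (equivalently from $p_\kappa=(1-\kappa)p+\kappa\bigl(\frac2a+\frac{3c_0}{ab}\bigr)$ being an affine interpolation of two positive numbers), and the interval bound from the monotonicity of $q_\kappa=\frac{2}{a\ell}+\frac3a$ in $\ell$ plus the endpoint identities $q_0=q$, $q_1=b$.

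However, the step you flagged as needing the most care is indeed where the argument cannot be carried out from the stated hypotheses: $q_1=\frac{2b+3c_0}{ac_0}$ equals $b$ only if the endpoint pair satisfies the companion relation $\frac{2}{c_0}+\frac{3}{b}=a$, whereas the lemma assumes merely $b,c_0\ge1$ with no relation to $a$. Without that relation the conclusion of the lemma is in fact false, and since the system \eqref{ro} determines $(p_\kappa,q_\kappa)$ uniquely once $\ell(\kappa)>0$ is fixed, no alternative choice can rescue it: take $a=2$, $q=3$ (so $p=2$), $b=2$, $c_0=100$ and $\kappa=1$; then $\ell(1)=50$ and the unique solution has
\[
q_1=\frac{2+3\cdot 50}{2\cdot 50}=1.52<2=\min\{q,b\},
\]
violating the claimed bound. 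So the defect lies in the lemma's statement rather than in your argument: the hypothesis $\frac{2}{c_0}+\frac{3}{b}=a$ is implicitly assumed but omitted. It does hold in every application in the paper ($a=b=2$, $c_0=4$ gives $\frac24+\frac32=2$; $a=3$, $b=2$, $c_1=\frac43$ gives $\frac32+\frac32=3$), which is presumably why the authors, whose proof stops at the explicit formulas and never checks the bounds, did not notice. Under that additional hypothesis your proof is complete and correct.
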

\begin{proof}
From $\f{2}{p}+\f{3}{q}=a$, we see that
$$\f{p}{q}=\f{1}{3}\big(pa-2\big)$$
Inserting this into $\eqref{ro}_2$, we find that
\be\label{eldeng}
\f{p_{\kappa}}{q_{\kappa}}=\f{1}{3}\big(pa-2\big)\big(1-\kappa\big)+\f{c_0\kappa}{b}.
\ee
This together with $\eqref{ro}_1$ yields that
$p_{\kappa}=p+\kappa(\f{3c_0}{ab}-p +\f2a)$.
Then it follows from \eqref{eldeng} that $q_{\kappa}=\f{ 3pa+3\kappa(\f{3c_0}{b}-pa+2) }{a[pa-2+\kappa(\f{3c_0}{b}-pa+2) ]}.$ The proof of this lemma is   completed.
\end{proof}
\section{Proof of Theorem \ref{the1.1}  and \ref{the1.2}}
\label{sec3}
\setcounter{section}{3}\setcounter{equation}{0}
This section is devoted to proving  Theorem \ref{the1.1} and \ref{the1.2}.
\begin{proof}[Proof of  Theorem \ref{the1.1}]
Multiplying both side of the Navier-Stokes equations \eqref{NS} by $u|u|^{2}$, integrating by parts and  divergence-free
condition, we conclude that
\be\ba\label{2.8}
\f{1}{4}\f{d}{dt}\int_{\mathbb{R}^{3}}|u|^{4}dx+\int_{\mathbb{R}^{3}}|\nabla u|^{2}|u|^{2}dx
+\f{1}{2}\int_{\mathbb{R}^{3}}|\nabla| u|^{2}|^{2}dx&=-\int_{\mathbb{R}^{3}} u\cdot\nabla \Pi |u|^{2}dx =I.
\ea\ee
In what follows, based on \eqref{serrin1}, it suffices to bound $I$ under the hypothesis of Theorem \ref{the1.1}.

(1)
Using integration by parts again, incompressible condition, and Cauchy-Schwarz inequality, we find that
\be\ba
I=&\int_{\mathbb{R}^{3}} \Pi u\cdot\nabla |u|^{2}dx
&\leq C\int_{\mathbb{R}^{3}}\Pi^{2} |u|^{2}dx+\f{1}{8} \int_{\mathbb{R}^{3}} |\nabla u|^{2}|u|^{2}dx.\label{2.10}
\ea\ee
By means of the H\"older inequality \eqref{hiL} or interpolation characteristic \eqref{Interpolation characteristic}
and Sobolev embedding \eqref{sl} in Lorentz spaces,
\be\label{2.11}\Big\| |u|^{2}\Big\|_{L^{\f{2q}{q-1},2}}\leq\| |u|^{2} \Big\|^{1-\f{3}{2q}}_{L^{2}(\mathbb{R}^{3})}\Big\|
|u|^{2}\Big\|^{\f{3}{q}}_{L^{6,2}(\mathbb{R}^{3})} \leq C\Big\| |u|^{2} \Big\|^{1-\f{3}{2q}}_{L^{2}(\mathbb{R}^{3})}\Big\|\nabla
|u|^{2}\Big\|^{\f{3}{2q}}_{L^{2}(\mathbb{R}^{3})}.\ee
With the help of the H\"older inequality \eqref{hiL}, the Calder\'on-Zygmund Theorem and \eqref{2.11},  we infer that
\be\ba\label{2.122}
\int_{\mathbb{R}^{3}}\Pi^{2} |u|^{2}dx &\leq \|\Pi\|_{L^{q,\infty}(\mathbb{R}^{3})}\|\Pi\|_{L^{\f{2q}{q-1},2}(\mathbb{R}^{3})}
\Big\| |u|^{2}\Big\|_{L^{\f{2q}{q-1},2}(\mathbb{R}^{3})}\\
 &\leq C\|\Pi\|_{L^{q,\infty}(\mathbb{R}^{3})}\Big\||u|^{2} \Big\|_{L^{\f{2q}{q-1},2}(\mathbb{R}^{3})}
\Big\| |u|^{2}\Big\|_{L^{\f{2q}{q-1},2}(\mathbb{R}^{3})}\\
&\leq C\|\Pi\|_{L^{q,\infty}(\mathbb{R}^{3})}\Big\| |u|^{2} \Big\|^{2-\f{3}{q}}_{L^{2}(\mathbb{R}^{3})}\Big\|\nabla
|u|^{2}\Big\|^{\f{3}{q}}_{L^{2}(\mathbb{R}^{3})} \\&\leq  C \|\Pi\|^{\f{2q}{2q-3}}_{L^{q,\infty}(\mathbb{R}^{3})}\Big\||u|^{2}\Big\|_{L^{2}(\mathbb{R}^{3})}^{2}
 +\f18\Big\| |u| |\nabla u| \Big\|^{2}_{L^{2}(\mathbb{R}^{3})},
   \ea
\ee
where the Young inequality was used.

Inserting \eqref{2.122} and \eqref{2.10} into \eqref{2.8}, we arrive at
\be\ba\label{2.12}
 \f{d}{dt}\int_{\mathbb{R}^{3}}|u|^{4}dx\leq C \|\Pi\|^{\f{2q}{2q-3}}_{L^{q,\infty}(\mathbb{R}^{3})}\Big\||u|^{2}\Big\|_{L^{2}(\mathbb{R}^{3})}^{2}=C
  \|\Pi\|^{p}_{L^{q,\infty}(\mathbb{R}^{3})}\Big\||u|^{2}\Big\|_{L^{2}(\mathbb{R}^{3})}^{2}.
  \ea
\ee
Thanks to interpolation characteristic \eqref{Interpolation characteristic} or the H\"older inequality \eqref{hiL}, applying lemma  \ref{lemma2.2} with $a=b=2, c_{0}=4$ and \eqref{Lorentzincrease}, we see that
\be\label{2.2000}
 \|\Pi\|^{p_{\kappa}}_{L^{q_{\kappa},\infty}(\mathbb{R}^{3})}\leq
 \|\Pi\|^{p(1-\kappa)}_{L^{q ,\infty}(\mathbb{R}^{3})} \|\Pi\|^{4\kappa}_{L^{2,\infty}}\leq C
 \|\Pi\|^{p(1-\kappa)}_{L^{q ,\infty}(\mathbb{R}^{3})} \|\Pi\|^{4\kappa}_{L^{2 }(\mathbb{R}^{3})}\leq C
 \|\Pi\|^{p(1-\kappa)}_{L^{q ,\infty}(\mathbb{R}^{3})} \Big\||u|^{2}\Big\|^{4\kappa}_{L^{2 }(\mathbb{R}^{3})}.
\ee
Since the pair $(p_{\kappa}, q_{\kappa})$ also meets $2/p_{\kappa}+3/q_{\kappa}=2$, we insert \eqref{2.2000} into \eqref{2.12} to obtain
$$\ba
\f{d}{dt}\|u\|^{4}_{L^{4 }(\mathbb{R}^{3})}\leq C
  \|\Pi\|^{p_{\kappa}}_{L^{q_{\kappa},\infty}(\mathbb{R}^{3})}\Big\||u|^{2}\Big\|_{L^{2}(\mathbb{R}^{3})}^{2}\leq C \|\Pi\|^{p(1-\kappa)}_{L^{q ,\infty}(\mathbb{R}^{3})} \| u \|^{4(1+2\kappa)}_{L^{4 }(\mathbb{R}^{3})}.
\ea$$
Now, wa are in a position to invoke  Lemma \ref{2.1} and \eqref{serrin1} to complete the proof of this part.

(2) From the pressure equations $-\Delta\Pi=\text{div\,div\,}(u\otimes u)$ and the Calder\'on-Zygmund Theorem, we know that
$$\|\nabla\Pi\|_{L^{2}(\mathbb{R}^{3})}\leq C \Big\||u||\nabla u|\Big\|_{L^{2}(\mathbb{R}^{3})}.$$
In view of the interpolation characteristic \eqref{Interpolation characteristic} or the H\"older inequality \eqref{hiL},  one deduces that
\be\label{2.13}
\| u \|_{L^{\f{12q}{3q-2},4}(\mathbb{R}^{3})}\leq \| u \|^{1-\f{1}{q}}_{L^{4 }(\mathbb{R}^{3})}\| u \|^{ \f{1}{q}}_{L^{12,4}(\mathbb{R}^{3})}.
\ee
Sobolev embedding \eqref{sl} in Lorentz spaces leads to
\be\label{2.14}
\|u\|^{2}_{L^{12,4}(\mathbb{R}^{3})}=\Big\||u|^{2}\Big\|_{L^{6,2}(\mathbb{R}^{3})}\leq C\B\||\nabla u| |u| \B\|_{L^{2}(\mathbb{R}^{3})}.
\ee
In the light of the H\"older inequality \eqref{hiL}
\eqref{2.13} and \eqref{2.14}, we have
$$
\ba
I&\leq \B\||\nabla \Pi|^{\f12}\B\|_{L^{4}(\mathbb{R}^{3})}\B\||\nabla \Pi|^{\f12}\B\|_{L^{2q,\infty}(\mathbb{R}^{3})}\B\||u|^{3}\B\|_{L^{\f{4q}{3q-2},\f43}(\mathbb{R}^{3})}\\
&\leq  \| \nabla \Pi  \|^{\f12}_{L^{2}(\mathbb{R}^{3})} \| \nabla \Pi \|^{\f12}_{L^{q,\infty}(\mathbb{R}^{3})} \| u \|^{3}_{L^{\f{12q}{3q-2},4}(\mathbb{R}^{3})}\\
&\leq  C\B\| |u||\nabla u| \B\|^{\f{3+q}{2q}}_{L^{2}(\mathbb{R}^{3})} \| \nabla \Pi \|^{\f12}_{L^{q,\infty}(\mathbb{R}^{3})} \| u \|^{\f{3(q-3)}{q}}_{L^{ 4}(\mathbb{R}^{3}) }.
\ea$$
Combining this with the Young inequality, we see that
$$\ba
I \leq C\| \nabla \Pi \|^{\f{2q}{3q-3}}_{L^{q,\infty}(\mathbb{R}^{3})}\| u \|^{4}_{L^{4 }(\mathbb{R}^{3})}+\f18
\B\||\nabla u| |u| \B\|_{L^{2 }(\mathbb{R}^{3})}^{2}.
\ea$$
Plugging this into \eqref{2.8}, we get
\be\ba \label{2.16} \f{d}{dt}\int_{\mathbb{R}^{3}}|u|^{4}dx +\int_{\mathbb{R}^{3}}|\nabla u|^{2}|u|^{2}dx
 \leq C\| \nabla \Pi \|^{\f{2q}{3q-3}}_{L^{q,\infty}(\mathbb{R}^{3})}\| u \|^{4}_{L^{4 }(\mathbb{R}^{3})}=C\| \nabla \Pi \|^{p}_{L^{q,\infty}(\mathbb{R}^{3})}\| u \|^{4}_{L^{4 }(\mathbb{R}^{3})}.
\ea\ee
In view of interpolation characteristic \eqref{Interpolation characteristic} or the H\"older inequality \eqref{hiL},
\eqref{Lorentzincrease} and Lemma \eqref{lemma2.2}, we infer that
\be\label{2.17}
\| \nabla \Pi \|^{p_{\kappa}}_{L^{q_{\kappa},\infty}(\mathbb{R}^{3})}\leq
\| \nabla \Pi \|^{p(1-\kappa)}_{L^{q,\infty}(\mathbb{R}^{3})}\| \nabla \Pi \|^{c_{1}\kappa}_{L^{2,\infty}(\mathbb{R}^{3})}\leq
\| \nabla \Pi \|^{p(1-\kappa)}_{L^{q,\infty}(\mathbb{R}^{3})}\| \nabla \Pi \|^{c_{1}\kappa}_{L^{2 }(\mathbb{R}^{3})},
\ee
where $c_{1}$ is determined  later.

Notice that $2/p_{\kappa}+3/q_{\kappa}=3$, hence, it follows from \eqref{2.16}, \eqref{2.17} and the Young inequality that
$$\ba
\f{d}{dt}\int_{\mathbb{R}^{3}}|u|^{4}dx +\int_{\mathbb{R}^{3}}|\nabla u|^{2}|u|^{2}dx
 &\leq C\| \nabla \Pi \|^{p_{\kappa}}_{L^{q_{\kappa},\infty}(\mathbb{R}^{3})}\| u \|^{4}_{L^{4 }(\mathbb{R}^{3})}\\
 &\leq C\| \nabla \Pi \|^{p(1-\kappa)}_{L^{q,\infty }(\mathbb{R}^{3})}\| \nabla \Pi \|^{c_{1}\kappa}_{L^{2 }(\mathbb{R}^{3})}\| u \|^{4}_{L^{4 }(\mathbb{R}^{3})}\\
 &\leq C\| \nabla \Pi \|^{p(1-\kappa)}_{L^{q,\infty}(\mathbb{R}^{3})}\Big\| |u||\nabla u| \Big\|^{c_{1}\kappa}_{L^{2 }(\mathbb{R}^{3})}\| u \|^{4}_{L^{4 }(\mathbb{R}^{3})}\\ &\leq C\| \nabla \Pi \|^{\f{2p(1-\kappa)}{2-c_{1}\kappa}}_{L^{q,\infty}(\mathbb{R}^{3})} \| u \|^{\f{8}{2-c_{1}\kappa}}_{L^{4 }(\mathbb{R}^{3})}+\f18\Big\| |u||\nabla u| \Big\|^{2}_{L^{2 }(\mathbb{R}^{3})}. \ea$$
Before going further, we take  $$\delta=\f{(2-c_{1})\kappa}{2-c_{1}\kappa},~~c_{1}=\f{4}{3},$$
in the last relation.
Therefore, we  obtain that
 $$\f{d}{dt}  \| u \|^{4}_{L^{4 }(\mathbb{R}^{3})}\leq C\| \nabla \Pi \|^{p(1-\delta)}_{L^{q,\infty}(\mathbb{R}^{3})} \| u \|^{4(1+2\delta) }_{L^{4 }(\mathbb{R}^{3})}.$$
 Thanks to $\kappa\in[0,1]$, we know that $\delta\in[0,1]$.
Finally,
Lemma \ref{2.1} and  \eqref{serrin1} allow us to finish the proof of Theorem \ref{the1.1}.

\end{proof}

\begin{proof}[Proof of Theorem \ref{the1.2}]

Multiplying the Navier-Stokes system (\ref{NS}) by $\Delta u$, integrating over $\mathbb{R}^{3}$,  div\,$u$=0, and integrating by
parts, we have
\be\ba\label{2.18}
\f12\f{d}{dt}\int_{\mathbb{R}^{3}}|\nabla u|^{2}dx+\int_{\mathbb{R}^{3}}|\nabla^{2}u|dx&\leq \int_{\mathbb{R}^{3}} |\nabla u|^{3}dx.
\ea\ee
Arguing as the same manner in \eqref{2.11}, we find
\be\label{2.19}
\|\nabla u \|^{2}_{L^{\f{2q}{q-1},2}(\mathbb{R}^{3})}\leq C\|  \nabla u \|^{2-\f{3}{q}}_{L^{2}(\mathbb{R}^{3})}\|\nabla^{2} u\|^{\f{3}{q}}_{L^{2}(\mathbb{R}^{3})}.\ee
We derive from the H\"older inequality \eqref{hiL}, \eqref{2.19}  and the Young inequality that
\be\label{2.20}\ba
\int_{\mathbb{R}^{3}} |\nabla u|^{3}dx&\leq \|\nabla u\|_{L^{q,\infty}(\mathbb{R}^{3})}\|\nabla u \|^{2}_{L^{\f{2q}{q-1},2}(\mathbb{R}^{3})} \\
&\leq C\|\nabla u\|_{L^{q,\infty}(\mathbb{R}^{3})}\|  \nabla u \|^{2-\f{3}{q}}_{L^{2}(\mathbb{R}^{3})}\|\nabla^{2} u\|^{\f{3}{q}}_{L^{2}(\mathbb{R}^{3})}\\
&\leq  C \|\nabla u\|^{\f{2q}{2q-3}}_{L^{q,\infty}(\mathbb{R}^{3})}\|\nabla u\|_{L^{2}(\mathbb{R}^{3})}^{2}
 +\f18\| \nabla^{2}u\|^{2}_{L^{2}(\mathbb{R}^{3})}.
\ea\ee
Substituting \eqref{2.20} into  \eqref{2.18}, we show that
\be\label{3.14}
\f{d}{dt}\int_{\mathbb{R}^{3}}|\nabla u|^{2}dx\leq  C \|\nabla u\|^{\f{2q}{2q-3}}_{L^{q,\infty}(\mathbb{R}^{3})}\|\nabla u\|_{L^{2}(\mathbb{R}^{3})}^{2}= C \|\nabla u\|^{p}_{L^{q,\infty}(\mathbb{R}^{3})}\|\nabla u\|_{L^{2}(\mathbb{R}^{3})}^{2}.
\ee
 Along the exact same line as in the proof of \eqref{2.2000}, we find
\be\label{3.15}
 \|\nabla u\|^{p_{\kappa}}_{L^{q_{\kappa},\infty}(\mathbb{R}^{3})}\leq
 \|\nabla u\|^{p(1-\kappa)}_{L^{q ,\infty}(\mathbb{R}^{3})}\| \nabla u \|^{4\kappa}_{L^{2 }(\mathbb{R}^{3})}.
 \ee
  Consequently, we derive from \eqref{3.14} and \eqref{3.15} that
 $$\ba \f{d}{dt}\|\nabla u\|^{2}_{L^2(\mathbb{R}^{3})}
 \leq C \|\nabla u\|^{p_{\kappa}}_{L^{q_{\kappa},\infty}(\mathbb{R}^{3})}\|\nabla u\|_{L^{2}(\mathbb{R}^{3})}^{2}\leq C \|\nabla u\|^{p(1-\kappa)}_{L^{q ,\infty}(\mathbb{R}^{3})} \| \nabla u \|^{2(1+2\kappa)}_{L^{2 }(\mathbb{R}^{3})}.
\ea$$
 Lemma \ref{2.1} and \eqref{serrin2} help  us  achieve the proof of  Theorem \ref{the1.2}.
 \end{proof}

\section*{Acknowledgement}
%The authors thank the anonymous referee and the associated editor for the invaluable
%comments and suggestions which helped to improve the paper greatly.
Wang was partially supported by  the National Natural
Science Foundation of China under grant (No. 11971446  and No. 11601492)  and the Youth Core Teachers Foundation of  Zhengzhou University of Light Industry.
Wei was partially supported by the National Natural Science Foundation of China under grant ( No. 11601423, No. 11701450, No. 11701451, No. 11771352, No. 11871057) and Scientific Research Program Funded by Shaanxi Provincial Education Department (Program No. 18JK0763).

%\end{CJK*}
\end{document}